 
\documentclass[12 pts]{article}

\usepackage{amsmath, amsthm, amssymb,amsfonts,latexsym,cancel}
\usepackage{color}
\usepackage{array}
\usepackage[bottom]{footmisc}
\usepackage[table]{xcolor}
\usepackage{ifsym}
\usepackage{float}
\usepackage[all]{xy}
\usepackage{ae,aecompl}
\usepackage[pdftex]{graphicx}
\DeclareGraphicsExtensions{.png,.pdf,.jpg,.bmp}
\usepackage{epsfig}
\usepackage{algorithm}
\usepackage{listings}
\usepackage{multirow}
\usepackage{rotating}
\usepackage{booktabs}
\usepackage{fancyhdr}
\usepackage[pdftex=true,colorlinks=false,plainpages=false]{hyperref}
\usepackage[margin=1in]{geometry}
\usepackage{textcomp}
\usepackage[T1]{fontenc}
\usepackage{relsize}
\usepackage{makecell}

\usepackage{mathtools}


\usepackage[justification=centering]{caption}
\usepackage{subcaption} 
 
\usepackage{titlesec}
\titlespacing{\paragraph}{%
  0pt}{
  0.0\baselineskip}{
  1em}

\usepackage{amsmath,amsthm,amssymb}
\usepackage{graphicx}
\usepackage{epsfig}
\usepackage{multirow}
\usepackage{caption}
\usepackage{subcaption}

\usepackage{authblk}
\newtheorem{theorem}{Theorem}
\newtheorem{corollary}{Corollary}
\newtheorem{lemma}{Lemma}
\newtheorem{definition}{Definition}
\newtheorem{proposition}{Proposition}

\newcommand{\Z}{\mathbb{Z}}

\setlength\parindent{0pt}
\setlength{\parskip}{1em}

\usepackage[noend]{algpseudocode}

\makeatletter
\def\BState{\State\hskip-\ALG@thistlm}
\makeatother

\DeclareMathOperator*{\argmin}{Argmin}

\usepackage[titletoc,title]{appendix}

\begin{document}

\bibliographystyle{abbrv}

\title{\vspace{-2em}A Linear Programming Based Approach to the Steiner Tree Problem with a Fixed Number of Terminals}
\author[1]{Matias Siebert}
\author[1]{Shabbir Ahmed}
\author[1]{George Nemhauser}
\affil[1]{H. Milton Stewart School of Industrial and Systems Engineering, Georgia Institute of Technology, Atlanta, GA}
\date{}
\maketitle

\begin{abstract}
\noindent We present a set of integer programs (IPs) for the Steiner tree problem with the property that the best solution obtained by solving all, provides an optimal Steiner tree. Each IP is polynomial in the size of the underlying graph and our main result is that the linear programming (LP) relaxation of each IP is integral so that it can be solved as a linear program. However, the number of IPs grows exponentially with the number of terminals in the Steiner tree. As a consequence, we are able to solve the Steiner tree problem by solving a polynomial number of LPs, when the number of terminals is fixed.
\end{abstract}

\section{Introduction}\label{Introduction}

Given an undirected graph $G=(V,E)$, where $V$ is the set of nodes, $E$ is the set of edges, $c_e\geq 0$ is the cost of each edge $e\in E$, and $R\subseteq V$ is a set of {\it terminal nodes}, a Steiner tree is a subgraph of $G$, which is a tree such that all terminal nodes are connected. The Steiner tree problem is to find such a tree that minimizes the sum of the edge costs over all edges in the tree. The tree may contain nodes in $V\backslash R$, which are called {\it Steiner nodes}. The Steiner tree problem is NP-Hard \cite{karp1972reducibility}, in fact, it is even NP-Hard to find approximate solutions whose cost is within a factor $\frac{96}{95}$ of the cost of an optimal solution \cite{bern1989steiner,chlebik2008steiner}.

An important line of research has been to study integer linear programming formulations (IP) for this problem. In \cite{goemans1993catalog} the authors provide a catalog of Steiner tree formulations, and show the equivalence of some of these formulations. In \cite{goemans1994steiner} the author studies the vertex-weighted version of the undirected Steiner tree problem, and presents a complete description of the polytope when the graph is {\it series-parallel}. By projecting this formulation, some facet-defining inequalities for the Steiner tree polytope are obtained. In \cite{polzin2001comparison}, the authors compare different formulations of the Steiner tree problem in terms of the strength of their linear relaxations (LP), and propose a new polynomial size formulation with a stronger LP relaxation than the ones analyzed in their paper.

In every Steiner tree, we can pick an arbitrary node $r\in R$ and find a direction of the edges, such that we can construct a unique arborescence rooted at $r$ whose leaf nodes correspond to nodes in $R\backslash\{r\}$. Note that an $r$-arborescence is a directed tree where every node in the arborescence is reachable from the root node $r$. Consequently, the Steiner tree problem can be modeled as an $r$-arborescence whose leaves are nodes in $R\backslash\{r\}$ and where all terminal nodes are reached from $r$. A well studied formulation that uses this fact, is the {\it Bidirected Cut} formulation \cite{edmonds1967optimum,wong1984dual}. This is a compact and simple formulation, whose integrality gap is known to be at most 2, but is believed to be close to one. The best known lower bound on the integrality gap for this formulation is $\frac{36}{31}\approx 1.16$ \cite{byrka2013steiner}. An upper bound on the integrality gap for this formulation has been found for special cases. For {\it quasi-bipartite} graphs, the integrality gap is upper bounded by $\frac{3}{2}$ \cite{rajagopalan1999bidirected}, and for {\it claw-free} graphs it is upper bounded by $\ln(4)$ \cite{feldmann2016equivalence}.

Another well studied formulation, is the so called {\it Hypergraphic} formulation \cite{konemann2011partition,polzin2003steiner,warme1998spanning}. This formulation has one variable for each {\it component} of the graph, where a {\it component} is a tree whose leaves correspond to terminal nodes. The number of variables and constraints for this formulation are proportional to the number of {\it components}, which is exponential in the number of terminal nodes. On the positive side, we have that the {\it Hypergraphic} formulation is stronger than the {\it Bidirected Cut} formulation \cite{polzin2003steiner}. The integrality gap of the {\it Hypergraphic} formulation is lower bounded by $\frac{8}{7}\approx1.14$ \cite{konemann2011partition} and upper bounded by $\ln(4)$ \cite{goemans2012matroids}. Moreover, in \cite{feldmann2016equivalence} the authors show that in {\it claw-free} graphs both formulations are equivalent. On the down side, solving the {\it Hypergraphic} formulation is strongly NP-Hard \cite{goemans2012matroids}. To address this problem, researchers have proposed to solve it only considering components with at most $k$ leaves, where $k$ is a fixed value. This considerably reduces the number of constraints and variables, and the solution of this restricted formulation is proven to be a factor of $\rho_k$ of an optimum solution, where $\rho_k\leq 1+\frac{1}{\lfloor\log_2(k)\rfloor}$ \cite{borchers1997thek}. A complete review of the {\it Hypergraphic} formulation and its variants can be found in \cite{chakrabarty2010hypergraphic}.

Finally, in \cite{dreyfus1971steiner} the authors propose an algorithm that finds an optimal solution for the Steiner tree problem in $\mathcal{O}(n^3+n^22^{b}+n3^{b})$, where $n=|V|$, $m=|A|$, and $b+1=|R|$. The result of this paper suggests that, for a fixed number of terminals, there may exist a polynomial size LP formulation of the Steiner tree problem, which is the motivation for the results presented in this paper.

Our main contribution is a set of independent IPs with the property that the best solution obtained by solving all, provides an
optimal Steiner tree. Each IP is polynomial in the size of the underlying graph and our main result is that the LP relaxation of
each IP is integral. Furthermore, the set of LPs can be solved in parallel. The drawback, is that the number of LPs grows exponentially with the number of terminal nodes so the complete algorithm is polynomial only for a fixed number of
terminals.

The remainder of this paper is organized as follows. Section \ref{Sol_Structure} analyzes the structure of Steiner trees, and also introduces definitions used in the paper. Section \ref{Proposed_Model} presents the proposed model, the proof of integrality, and an analysis of the size of the problem. In Section \ref{Results} we present computational experiments obtained by solving the Steiner tree problem using our proposed formulation. Section \ref{Conclusions} gives conclusions.

\section{Solution Structure}\label{Sol_Structure}

As mentioned before, several formulations use the fact that every Steiner tree contains an $r$-arborescence, some of which use a flow-based formulation, and others a cut-based formulation. Both of those common approaches work with the directed graph version $D$ of $G$. In our approach, we exploit two main properties that every solution must have. 
\begin{enumerate}
\item We can always pick an arbitrary node $r\in R$ as our root node, and find the min-cost $r$-arborescence that spans $R\backslash\{r\}$. Moreover, this $r$-arborescence can be modeled as a Min-cost Multi-Commodity Network Flow, where we have one commodity for every node in $R\backslash\{r\}$, and we want to send 1 unit of flow from $r$ to the corresponding node $i\in R\backslash\{r\}$.
\item Since the solution must correspond to an $r$-arborescence, there must be exactly one path from the root node $r$ to each one of the nodes in $R\backslash\{r\}$, which means that if any subset $S$ of the corresponding commodities of nodes in $R\backslash\{r\}$ share a path from $r$ to node $i\in V$, and then split in node $i$, then they will never meet again.  For example, if $S=\{k_1,k_2,k_3\}$ and in node $i$ the set $S$ splits into $S_1=\{k_1,k_2\}$ and $S_2=\{k_3\}$, then $k_1$ and $k_2$ will never share an arc again with $k_3$.
\end{enumerate}

Using these two properties, our formulation is based on two type of decisions, $(i)$ when we split the subsets of commodities, and $(ii)$ into which subsets of commodities. Note that all the commodities have the same source node, so they all start together. At some point, the set of all commodities will split into different subsets of commodities, and those subsets will also split into other subsets, and so on, until we have each commodity by itself. Our idea is to include a variable that tells us where a subset splits, and the partition that it uses. For instance, say that at some point we have the set $S=\{k_1,k_2,k_3\}$ of commodities that share a path. Eventually, $S$ is split, so the model has to decide where the split is going to happen. In addition, the model has to decide the way that $S$ is split. Note that there are 4 possible partitions of $S$.
\begin{itemize}
\item[(i)] $\{k_1,k_2,k_3\}\rightarrow\left\{\{k_1,k_2\},\{k_3\}\right\}$
\item[(ii)] $\{k_1,k_2,k_3\}\rightarrow\left\{\{k_1,k_3\},\{k_2\}\right\}$
\item[(iii)] $\{k_1,k_2,k_3\}\rightarrow\left\{\{k_2,k_3\},\{k_1\}\right\}$
\item[(iv)] $\{k_1,k_2,k_3\}\rightarrow\left\{\{k_1\},\{k_2\},\{k_3\}\right\}$
\end{itemize}
Two important observations are, first, we only consider proper partitions (i.e. a set is not a partition of itself), and second, the next partition that we can use will depend on the way that we partitioned $S$. If we take, for instance, partition $(i)$ then we will have to eventually partition the set $\{k_1,k_2\}$ into $\{k_1\},\{k_2\}$, but if we take partition $(iv)$ then we do not have to perform any more partitions.

\subsection{Notation}\label{Sec:Def}

In this section, we define the notation used in the rest of the paper. In Section \ref{Def_Example}, we provide an example to clarify the concepts introduced in this section.

We define $D=(V,A)$ as a directed graph such that, for every edge $\{i,j\}$ in $E$ we have two arcs $(i,j)$ and $(j,i)$ in $A$, where $V$ and $E$ are the set of nodes and edges of the original undirected graph $G$, and where $c_a=c_e$ for all $a\in A$ such that both of its end nodes correspond to the end nodes of $e\in E$. Let $r\in R$ be the arbitrarily selected root node, then we define a set $K$ of commodities, with one commodity for each of the nodes in $R\backslash\{r\}$. Let $s_k$ and $t_k$ be the source and sink nodes of commodity $k$, where $s_k=r$ for all $k\in K$, and $t_k$ is the corresponding node in $R\backslash\{r\}$. Let $S$ be the set of all subsets of $K$ with at least one element, i.e., $|S|=2^{|K|}-1$, and let $P$ be the set of all proper partitions of the sets of $S$ that have cardinality of at least 2. 

The underlying $r$-arborescence of any Steiner tree uses only a subset of the sets in $S$ (the set of all subsets of $K$), which contains the set of all commodities, and all the singletons. This collection of sets of $S$ forms a laminar family. A family $\mathcal{C}$ of sets is called {\it laminar} if for every $A,B\in \mathcal{C}$ we have $A\subseteq B$ or $B\subseteq A$ or $A\cap B=\emptyset$. Let $\mathcal{L}_b$ be the set of all possible laminar families when $|K|=b$. For laminar family $l\in\mathcal{L}_b$, let $S(l)$ be the set of subsets of $K$ that appear in laminar family $l$, and let $P(l)$ be the set of partitions used to split the subsets in $S(l)$.

For any $s'\in S(l)$ such that $|s'|\leq |K|-1$, we say that $\hat{s}$ is its {\it parent} set, if $s'$ is one of the sets obtained when we partition set $\hat{s}$ according to laminar family $l$. Note that there is only one parent set for every $s'\in S(l),\ |s'|\leq |K|-1$. Similarly, for any set $\hat{s}\in S(l)$ such that $|\hat{s}|\geq 2$, we say that $s'$ is a {\it child} set of $\hat{s}$, if we obtain $s'$ when we partition $\hat{s}$. Note that for every set $\hat{s}\in S(l),\ |\hat{s}|\geq 2$ we have at least 2 child sets. Let $S_l(p)$ be the set of all child nodes of partition $p$ in laminar family $l$, and let $P_l(s)$ be the partition $p$ that splits $s$ in laminar family $l$, which is defined only for sets $s$, such that $|s|\geq 2$. Finally, we say that a Steiner tree $T$ in $G$ follows an $(r,l)$ {\it structure}, if the arborescence rooted in $r$ contained in $T$ belongs to laminar family $l\in\mathcal{L}_b$, where $b=|R\backslash\{r\}|$.

\subsection{Example}\label{Def_Example}

Suppose we have an instance with 4 terminal nodes. Let $r$ be the root node, and let $K=\{k_1,k_2,k_3\}$. Then,

\begin{align*}
S=\left\{\underbrace{\{k_1\}}_{s_1};\underbrace{\{k_2\}}_{s_2};\underbrace{\{k_3\}}_{s_3};\underbrace{\{k_1,k_2\}}_{s_4};\underbrace{\{k_1,k_3\}}_{s_5};\underbrace{\{k_2,k_3\}}_{s_6};\underbrace{\{k_1,k_2,k_3\}}_{s_7}\right\}
\end{align*}
and
\begin{align*}
P=\left\{\underbrace{\{(k_1,k_2),(k_3)\}}_{p_1};\underbrace{\{(k_1,k_3),(k_2)\}}_{p_2};\underbrace{\{(k_2,k_3),(k_1)\}}_{p_3};\underbrace{\{(k_1),(k_2),(k_3)\}}_{p_4};\right.\\ 
\left.\underbrace{\{(k_1),(k_2)\}}_{p_5};\underbrace{\{(k_1),(k_3)\}}_{p_6};\underbrace{\{(k_2),(k_3)\}}_{p_7}\right\}\\
=\left\{\underbrace{\{s_4,s_3\}}_{p_1};\underbrace{\{s_5,s_2\}}_{p_2};\underbrace{\{s_6,s_1\}}_{p_3};\underbrace{\{s_1,s_2,s_3\}}_{p_4};\underbrace{\{s_1,s_2\}}_{p_5};\underbrace{\{s_1,s_3\}}_{p_6};\underbrace{\{s_2,s_3\}}_{p_7}\right\}
\end{align*}

In this case $p_1$, $p_2$, $p_3$ and $p_4$ are the partitions of the set whose cardinality is 3, which is the set $\{k_1,k_2,k_3\}$. The partitions $p_5$, $p_6$ and $p_7$ are the partitions of sets of cardinality 2, which are 3 different sets $\{k_1,k_2\},\{k_1,k_3\}$ and $\{k_2,k_3\}$.

Figure (\ref{Fig:Splitting_Seq_k3}) shows the tree representation of all the possible laminar families for the case $K=\{k_1,k_2,k_3\}$.

\begin{figure}[H]
\begin{subfigure}{.5\textwidth}
  \centering
  \includegraphics[width=.6\linewidth]{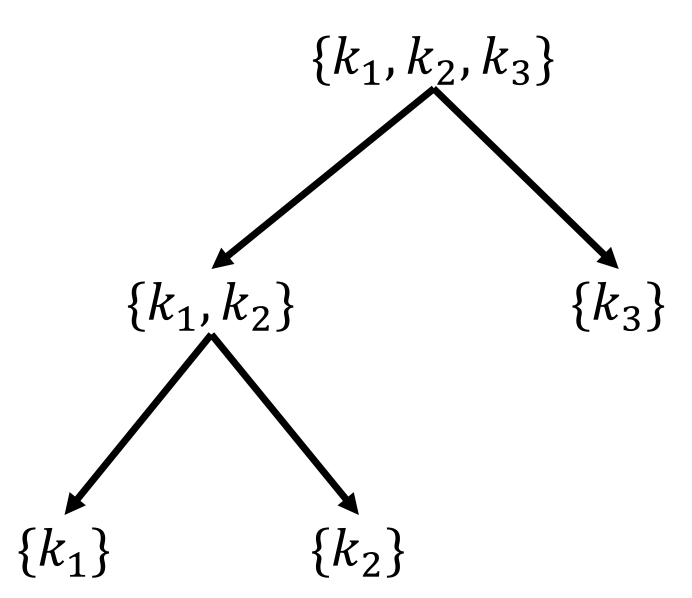}
  \caption{Tree representation of laminar family $l_1\in \mathcal{L}_3$.}
  \label{fig:sfig1}
\end{subfigure}
\begin{subfigure}{.5\textwidth}
  \centering
  \includegraphics[width=.6\linewidth]{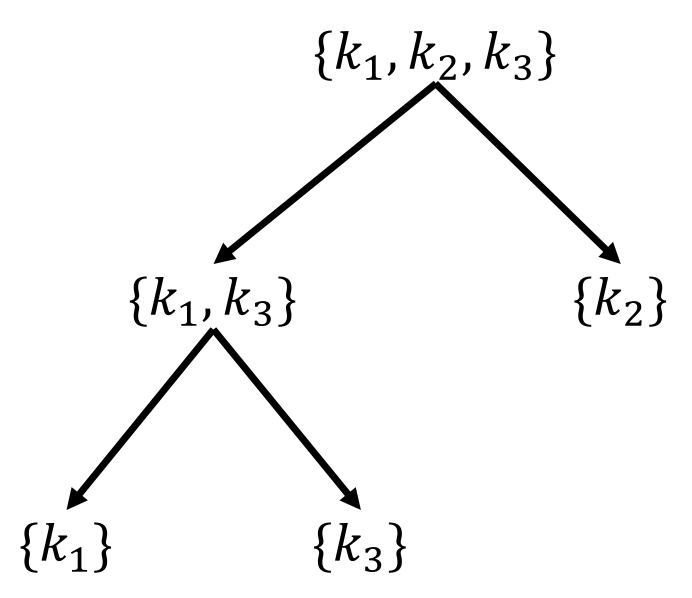}
  \caption{Tree representation of laminar family $l_2\in \mathcal{L}_3$.}
  \label{fig:sfig2}
\end{subfigure}\\
\begin{subfigure}{.5\textwidth}
  \centering
  \includegraphics[width=.6\linewidth]{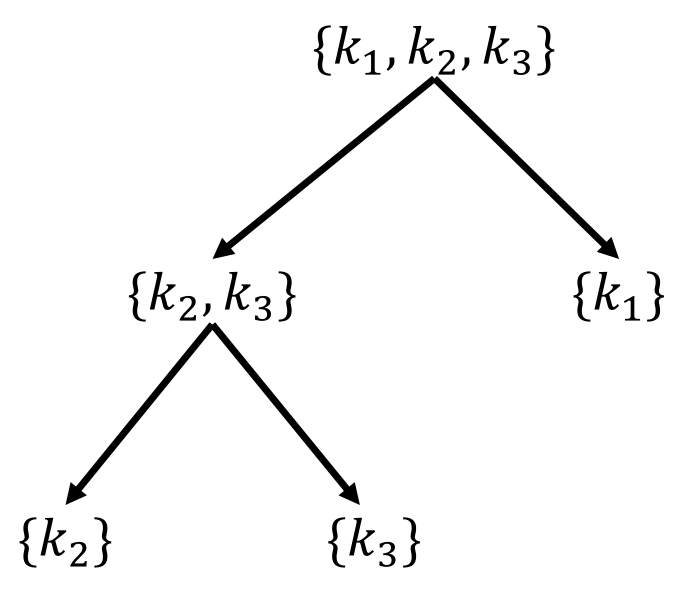}
  \caption{Tree representation of laminar family $l_3\in \mathcal{L}_3$.}
  \label{fig:sfig3}
\end{subfigure}
\begin{subfigure}{.5\textwidth}
  \centering
  \includegraphics[width=.6\linewidth]{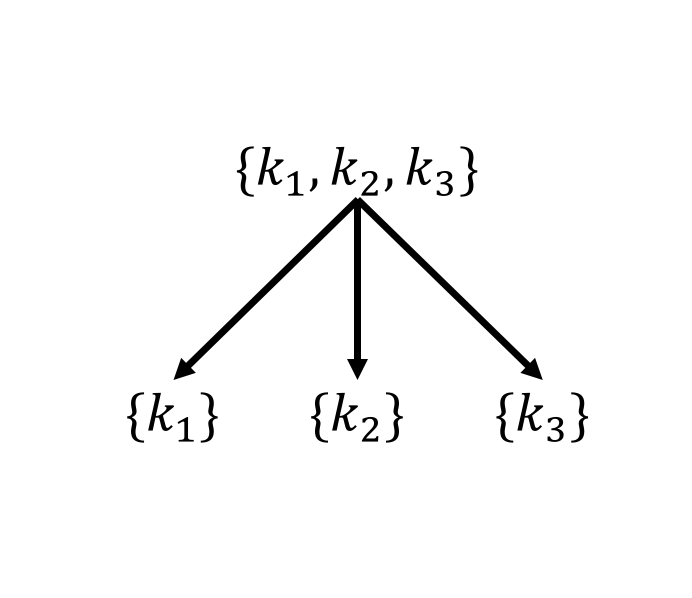}
  \caption{Tree representation of laminar family $l_4\in\mathcal{L}_3$.}
  \label{fig:sfig4}
\end{subfigure}
\caption{Tree representation of all possible laminar families for the case $K=\{k_1,k_2,k_3\}$}
\label{Fig:Splitting_Seq_k3}
\end{figure}
In this case, we have,
\begin{align*}
&P(l_1) = \{p_1,p_5\} && S(l_1) = \big\{\{k_1,k_2,k_3\};\{k_1,k_2\};\{k_1\};\{k_2\};\{k_3\}\big\} &= \{s_7,s_4,s_1,s_2,s_3\}\\
&P(l_2) = \{p_2,p_6\} && S(l_2) = \big\{\{k_1,k_2,k_3\};\{k_1,k_3\};\{k_1\};\{k_2\};\{k_3\}\big\} &= \{s_7,s_5,s_1,s_2,s_3\}\\
&P(l_3) = \{p_3,p_7\} && S(l_3) = \big\{\{k_1,k_2,k_3\};\{k_2,k_3\};\{k_1\};\{k_2\};\{k_3\}\big\} &= \{s_7,s_6,s_1,s_2,s_3\}\\
&P(l_4) = \{p_4\} && S(l_4) = \big\{\{k_1,k_2,k_3\};\{k_1\};\{k_2\};\{k_3\}\big\}&= \{s_7,s_1,s_2,s_3\}
\end{align*} 
Consider the laminar family $l_1\in\mathcal{L}_3$ shown in Figure (\ref{fig:sfig1}). We have that $\{k_1,k_2,k_3\}$ is the {\it parent} set of $\{k_1,k_2\}$ and $\{k_3\}$, and that $\{k_1,k_2\}$ is the {\it parent} set of $\{k_1\}$ and $\{k_2\}$. On the other hand, sets $\{k_1\}$ and $\{k_2\}$ are the {\it child} sets of $\{k_1,k_2\}$, and, set $\{k_1,k_2\}$ and $\{k_3\}$ are {\it child} sets of $\{k_1,k_2,k_3\}$.
Finally, we have
\begin{align*}
S_{l_1}(p_1) &= \big\{\{k_1,k_2\};\{k_3\}\big\} = \{s_4,s_3\}\\
S_{l_1}(p_5) &= \big\{\{k_1\};\{k_2\}\big\} = \{s_1,s_2\}\\
P_{l_1}\big(\{k_1,k_2,k_3\}\big) &= P_{l_1}(s_7) = \{p_1\} \\
P_{l_1}\big(\{k_1,k_2\}\big) &= P_{l_1}(s_4) = \{p_5\} 
\end{align*}

\section{IP Model}\label{Proposed_Model}

We propose an IP model $\mathcal{Z}_l$ that finds an optimal solution for a given laminar family $l\in\mathcal{L}_b$, where $b=|K|$. Since we are considering a fixed laminar family $l$, we are given $S(l)$, the set of subsets of $K$ that we use, and the partitions that are performed, as well as the {\it parent} and {\it child} sets of each set in $S(l)$. Therefore, the main purpose of our model is to choose the node where each partition is performed. Once we know where each partition is performed, it is easy to determine the arcs that minimize cost, since the problem reduces to finding a shortest path between the nodes where each subset $s\in S(l)$ begins and ends.

For notational simplicity, we do not index the variables of $\mathcal{Z}_l$ by $l$. Our proposed formulation $\mathcal{Z}_l$ is,

\begin{align}
&\mathcal{Z}_l :& \min&\sum_{a\in A}c_a\left(\sum_{s\in S(l)}f^{s}_a\right)\nonumber\\
&&\sum_{a\in\delta^+(i)}f^{s}_a-\sum_{a\in\delta^-(i)}f^{s}_a &= \hat{y}^{s}_i-\overline{y}^{s}_i &\forall i\in V,s\in S(l)\label{eq1}\\
&&w^{p}_i &=\overline{y}^{s}_i&\forall i\in V,s\in S(l): |s|\geq 2, p\in P_l(s)\label{eq2}\\
&&w^{p}_i &=\hat{y}^{s}_i&\forall i\in V,p\in P(l),s\in S_l(p)\label{eq3}\\
&&\sum_{i\in V}w^{p}_i &= 1 &\forall p\in P(l)\label{eq4}\\
&& \hat{y}_r^{K} &= 1 \label{eq5}\\
&& \hat{y}_i^{K} &= 0 &\forall i\in V\backslash\{r\}\label{eq6}\\
&& \overline{y}_{t_k}^{,k} &= 1 &\forall k\in K\label{eq7}\\
&& \overline{y}_i^{k}  &= 0&\forall k\in K,i\in V\backslash\{d_k\}\label{eq8}\\
&& f&\in\{0,1\}^{|A|\times|S(l)|}\label{eq9}\\
&& (\hat{y},\overline{y})&\in\{0,1\}^{|V|\times|S(l)|}\label{eq10}\\
&& w&\in\{0,1\}^{|V|\times|P(l)|}\label{eq11}
\end{align}

The variable $f_a^{s}$ is 1 if we send flow in arc $a$ for subset $s$, and 0 otherwise. This means, that all of the commodities that compose subset $s$ use arc $a$. The variable $\hat{y}_i^{s}$ is 1 if commodities in set $s$ {\it start} sharing a path in node $i$, and 0 otherwise. The variable $\overline{y}_i^{s}$ is 1 if commodities in set $s$ {\it end} sharing a path in node $i$, and 0 otherwise. Finally, $w_i^{p}$ is 1 if a partition $p$ is performed in node $i$, and 0 otherwise. Constraint (\ref{eq1}) is the flow conservation constraints for all the subsets that belong to laminar family $l$, which relates variables $f$, $\hat{y}$ and $\overline{y}$. Constraint (\ref{eq2}) states that if partition $p$ is performed in node $i$, then the subset $s$ that is partitioned has to stop sharing in node $i$. Constraint (\ref{eq3}) states that if partition $p$ occurs in node $i$, then the {\it child} sets of partition $p$ start to share in node $i$. Constraint (\ref{eq4}) imposes that the partitions can occur only in one node. Constraints (\ref{eq5}) and (\ref{eq6}) say that the set of all commodities start sharing in the root node. Finally, constraints (\ref{eq7}) and (\ref{eq8}) state that every commodity has to send its flow to its sink node.

The LP relaxation of $\mathcal{Z}_l$, which we refer to as $LP(\mathcal{Z}_l)$, is defined by the same objective function and set of linear equalities, but relaxing the integrality condition of the variables. This means that we replace $(\ref{eq9})$, $(\ref{eq10})$ and $(\ref{eq11})$ with

\begin{align}
f&\in[0,1]^{|A|\times|S(l)|}\label{eq12}\\
(\hat{y},\overline{y})&\in[0,1]^{|V|\times|S(l)|}\label{eq13}\\
w&\in[0,1]^{|V|\times|P(l)|}\label{eq14}
\end{align}

For all $l\in\mathcal{L}_b$, let $\mathcal{Q}_l^{IP}$ and $\mathcal{Q}_l^{LP}$ be the feasible regions of $\mathcal{Z}_l$ and $LP(\mathcal{Z}_l)$ respectively. For all $e\in E$, let $A(e)=\{a\in A:a\text{ is a directed arc of }e\in E\}$, and let $\chi\in\{0,1\}^{|E|}$. $\chi$ represents a solution to the Steiner tree problem in $G$, where $\chi_e=1$ if edge $e$ belongs to the solution.

\begin{definition}
Let $l\in\mathcal{L}_b$, and let $x=(f,w,\hat{y},\overline{y})$ be a feasible solution for $\mathcal{Q}_l^{IP}$. We define $\phi$ to be a mapping of $x$ to the original problem space, where
\begin{align*}
\phi(x) =\chi\in\{0,1\}^{|E|}
\end{align*}
given by
\begin{align*}
\chi_e = \text{min}\left\{1,\sum_{s\in S(l)}\sum_{a\in A(e)}f_a^s\right\}\ \ \ \ \text{for all }e\in E. 
\end{align*}
\end{definition}

\subsection{Structure of feasible solutions of $\mathcal{Q}_l^{IP}$ }\label{Sec:Structure_Q_IP}

We assume that $\mathcal{Q}_l^{IP}$ is feasible, and let $x=(f,w,\hat{y},\overline{y})$ be an arbitrary feasible solution for $\mathcal{Q}_l^{IP}$. A feasible solution can have two components, it must have an acyclic component, denoted by $x_{nc}$, and it may also have a cycle component, denoted by $x_c$. Then, $x =x_{nc}+x_c$. The acyclic component is always present and is a feasible solution to the problem, while the cycle component may not be present, i.e., we may have $x_c=0$. The cycle component is not feasible on its own, since it is only composed of cycles, and therefore it only fulfills the flow constraints (\ref{eq1}). We analyze each component separately.

First, consider the acyclic component, $x_{nc}=(f_{nc},w_{nc},\hat{y}_{nc},\overline{y}_{nc})$. Since this component has to be feasible, then by constraints (\ref{eq2})-(\ref{eq8}), $\hat{y}_{nc}^s$ and $\overline{y}_{nc}^s$ have exactly one non-zero component for all $s\in S(l)$,  which we call $i_s$ and $j_s$ respectively. There are two cases, either $i_s\ne j_s$, or $i_s=j_s$. In the first case, because of constraint (\ref{eq1}), there must be a path for $s$ from $i_s$ to $j_s$. In the second case, since $x_{nc}$ has no cycles, $f_{nc}^s=0$, or equivalently the set $s$ has no path. On the other hand, constraints (\ref{eq2})-(\ref{eq4}) ensure that the path of every parent set ends in the same node where the path of its child sets start. By constraints (\ref{eq5}) and (\ref{eq6}) we have that the set of all commodities must start in $r$, and by constraints (\ref{eq7}) and (\ref{eq8}), each singleton $k$ must end its path in $t_k$. Therefore, in every feasible solution, there is a path from $r$ to $t_k$ for all $k$ in $K$, which corresponds to the union of the paths of all sets containing $k$. Consequently, all Steiner trees with $(r,l)$ structure have a corresponding acyclic feasible solution to $\mathcal{Z}_l$, but not all acyclic solutions of $\mathcal{Z}_l$ maps to an $(r,l)$ structured Steiner tree. We will discuss the case in which a feasible acyclic solution is not an $(r,l)$ structured Steiner tree in Proposition \ref{Prop:Integer_Sol}.

Now, consider the cycle component. Let $x_{c}=(f_c,w_c,\hat{y}_c,\overline{y}_c)\ne0$, i.e, there is at least one set $\tilde{s}\in S(l)$ which has at least one cycle, and for simplicity of the argument, suppose that $\tilde{s}$ has only one cycle. The only way that this can happen is when the cycle uses arcs that are not in the path between $i_{\tilde{s}}$ and $j_{\tilde{s}}$, the path used by $\tilde{s}$ in the non-cycle component, otherwise it would be an infeasible solution because it would violate constraints (\ref{eq1}) and (\ref{eq9}). In the cycle component, $w_c$, $\hat{y}_c$ and $\tilde{y}_c$ are always 0, and if $x_c\ne 0$ then $f_c\ne 0$. Consequently, we have $w=w_{nc}$, $\hat{y}=\hat{y}_{nc}$, $\tilde{y}=\tilde{y}_{nc}$, and $f=f_{nc}+f_c$. If a solution does not contain cycles, then $f=f_{nc}$.

\begin{proposition}\label{Prop:Integer_Sol}
For all $l\in \mathcal{L}_b$, let $x$ be a feasible solution of $\mathcal{Q}_l$ that does not contain any cycles, then $\phi(x)$ is either an $(r,l)$ structured Steiner tree, or there exists a Steiner tree, with possible different $(r,l)$ structure, contained in the support of $\phi(x)$.
\end{proposition}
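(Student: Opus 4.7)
The strategy is to show that the support of $\phi(x)$ always contains a connected subgraph spanning $R$, so that a Steiner tree can be extracted as a spanning tree of that subgraph, and then to distinguish when the extracted tree equals $\phi(x)$ itself and inherits the $(r,l)$ structure from $l$.

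First I would use the structural analysis preceding the proposition: in any acyclic feasible $x$, the variables $\hat{y}^s$ and $\overline{y}^s$ each have a unique non-zero entry $i_s$ and $j_s$, and constraints (\ref{eq1})--(\ref{eq8}) force either an empty path (when $i_s=j_s$) or a simple directed $i_s$-to-$j_s$ path in the arc-support for each $s \in S(l)$. The $w$-variables splice these paths together consistently with the parent/child relations of $l$, so that for every commodity $k \in K$ the concatenation of paths along the chain $K \supset \cdots \supset \{k\}$ in $l$ is a directed walk from $r$ to $t_k$ in the arc-support. Applying $\phi$, the edge-support of $\phi(x)$ therefore contains an undirected $r$-to-$t_k$ walk for every $k$, hence a connected subgraph $H$ spanning $R$.

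From $H$, a Steiner tree $T$ is obtained by taking any spanning tree of the connected component of $H$ containing $r$ and pruning leaves that lie outside $R$. By construction $T$ is a Steiner tree whose edges are a subset of the support of $\phi(x)$, which yields the ``contained in the support'' alternative. If in addition the undirected edge set of $\phi(x)$ happens to be acyclic, then $T=\phi(x)$ and I would verify directly that it has $(r,l)$ structure: directing the edges of $T$ away from $r$, constraints (\ref{eq2})--(\ref{eq4}) force the commodity sets to split at every node $i$ with $w_i^p=1$ exactly according to the partition $p \in P(l)$, so the laminar family of the resulting $r$-arborescence is precisely $l$.

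I expect the delicate step to be the $(r,l)$-structure verification in the acyclic case, since a priori two incomparable sets in $S(l)$ could traverse a shared edge in opposite directions, which would muddle the induced orientation. The saving observation is that, under the acyclicity assumption on the edge-support of $\phi(x)$, any such opposite-direction overlap would immediately produce an undirected cycle; hence the paths of incomparable sets must be edge-disjoint, and the chaining enforced by the $w$-variables then translates the laminar combinatorics of $l$ cleanly into the arborescence structure of $T$.
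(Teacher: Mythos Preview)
Your first two paragraphs, together with the first sentence of the third, already prove the proposition and match the paper's argument: the paper also observes that the acyclic component of any feasible $x$ contains an $r$--$t_k$ walk for every $k\in K$, and then extracts a Steiner tree from a spanning tree of the edge-support. That part is fine.

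The additional claim you attempt in the second half of paragraph~3 and in paragraph~4 --- that edge-acyclicity of $\phi(x)$ forces $\phi(x)$ to have $(r,l)$ structure --- is false, and your ``saving observation'' does not rescue it. You only rule out \emph{opposite}-direction overlaps between paths of incomparable sets; same-direction overlaps are perfectly compatible with an acyclic edge-support and are exactly what breaks the $(r,l)$ structure. For a concrete instance with $l=l_1$ (so $S(l_1)=\{K,\{k_1,k_2\},\{k_1\},\{k_2\},\{k_3\}\}$): let the $K$-path be $r\to u$, let the $\{k_1,k_2\}$-path be empty ($i_{\{k_1,k_2\}}=j_{\{k_1,k_2\}}=u$), let the $\{k_1\}$-path be $u\to t_{k_1}$, and let the $\{k_2\}$- and $\{k_3\}$-paths both be $u\to a$ followed by $a\to t_{k_2}$ and $a\to t_{k_3}$ respectively. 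Then $\phi(x)$ has edge set $\{\{r,u\},\{u,t_{k_1}\},\{u,a\},\{a,t_{k_2}\},\{a,t_{k_3}\}\}$, which is a tree; yet its $r$-arborescence splits $K$ at $u$ into $\{k_1\}$ and $\{k_2,k_3\}$, i.e.\ it has $(r,l_3)$ structure, not $(r,l_1)$. The paper is aware of this phenomenon (its Figure~\ref{Fig:Comparisson_Feas_Sol} illustrates precisely a feasible $x\in\mathcal{Q}_{l_1}$ whose image carries $(r,l_3)$ structure) and accordingly makes no attempt to characterise when the first disjunct holds; it simply proves the second disjunct, which suffices. You should drop the last claim and stop after extracting $T$ from the support.
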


\begin{proof}
As was pointed out in Section \ref{Sec:Structure_Q_IP}, in the non-cycle component of every feasible solution $x$ of $\mathcal{Q}_l$, there is a path from $r$ to $t_k$ for every commodity $k\in K$. This is a property of every Steiner tree with an $(r,l)$ structure. But, there are also other cases where a feasible solution $x$ of $\mathcal{Q}_l$ does not map to an $(r,l)$ structured Steiner tree, as shown in Figure \ref{Fig:Comparisson_Feas_Sol}. We observe in Figure \ref{fig:Feas1}, a feasible solution $x$ of $\mathcal{Q}_{l_1}$ whose mapping $\phi(x)$ is not an $(r,l_1)$ structured Steiner tree\footnote{See Figure \ref{Fig:Splitting_Seq_k3} for description of laminar families $l_1$ and $l_3$.}. Note that commodities $k_2$ and $k_3$ share arcs, but not using set $\{k_2,k_3\}$ because this set is not in $l_1$.
\begin{figure}[H]
\begin{subfigure}{.5\textwidth}
  \centering
 \includegraphics[width=0.7\textwidth]{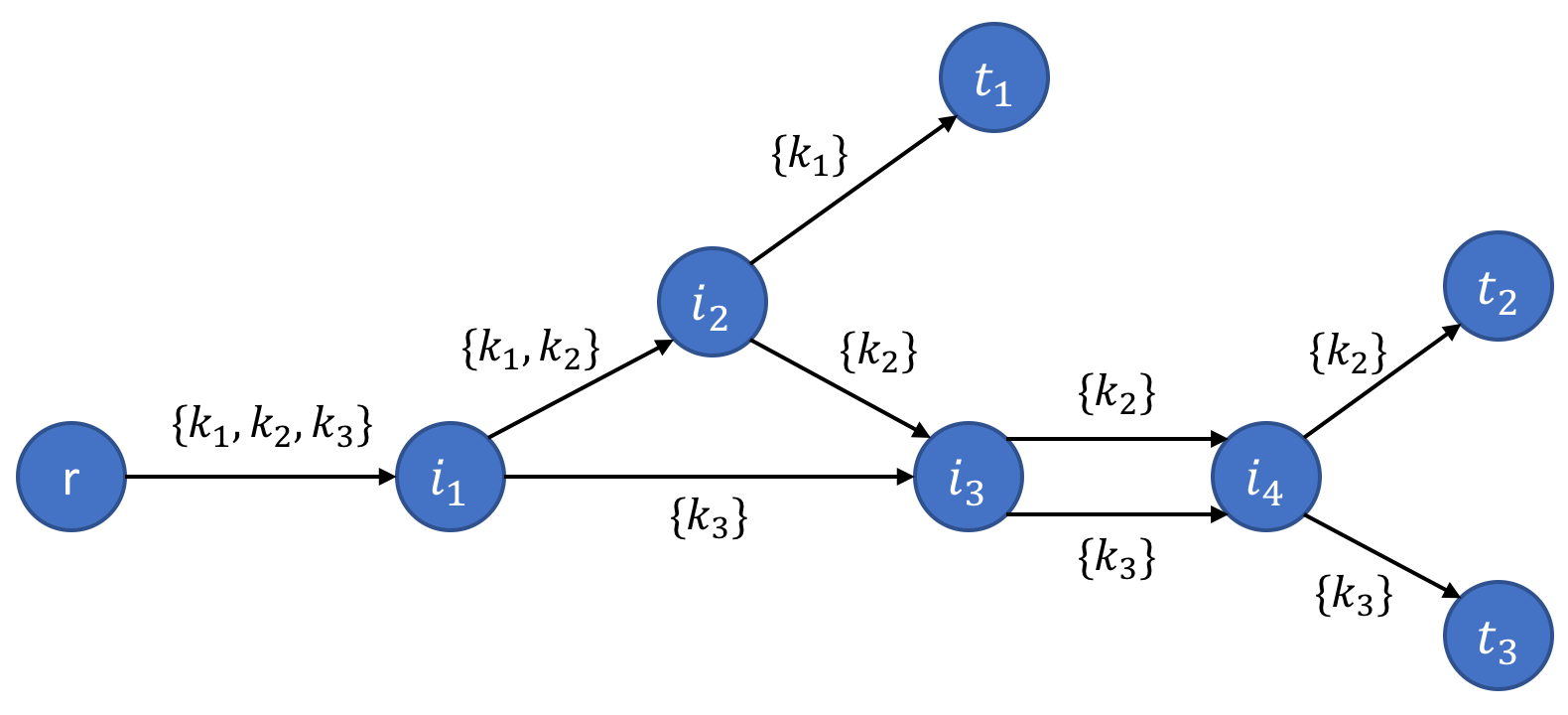}
  \caption{Feasible solution $x$ for $\mathcal{Q}_{l_1}$.}
  \label{fig:Feas1}
\end{subfigure}
\begin{subfigure}{.5\textwidth}
  \centering
 \includegraphics[width=0.7\textwidth]{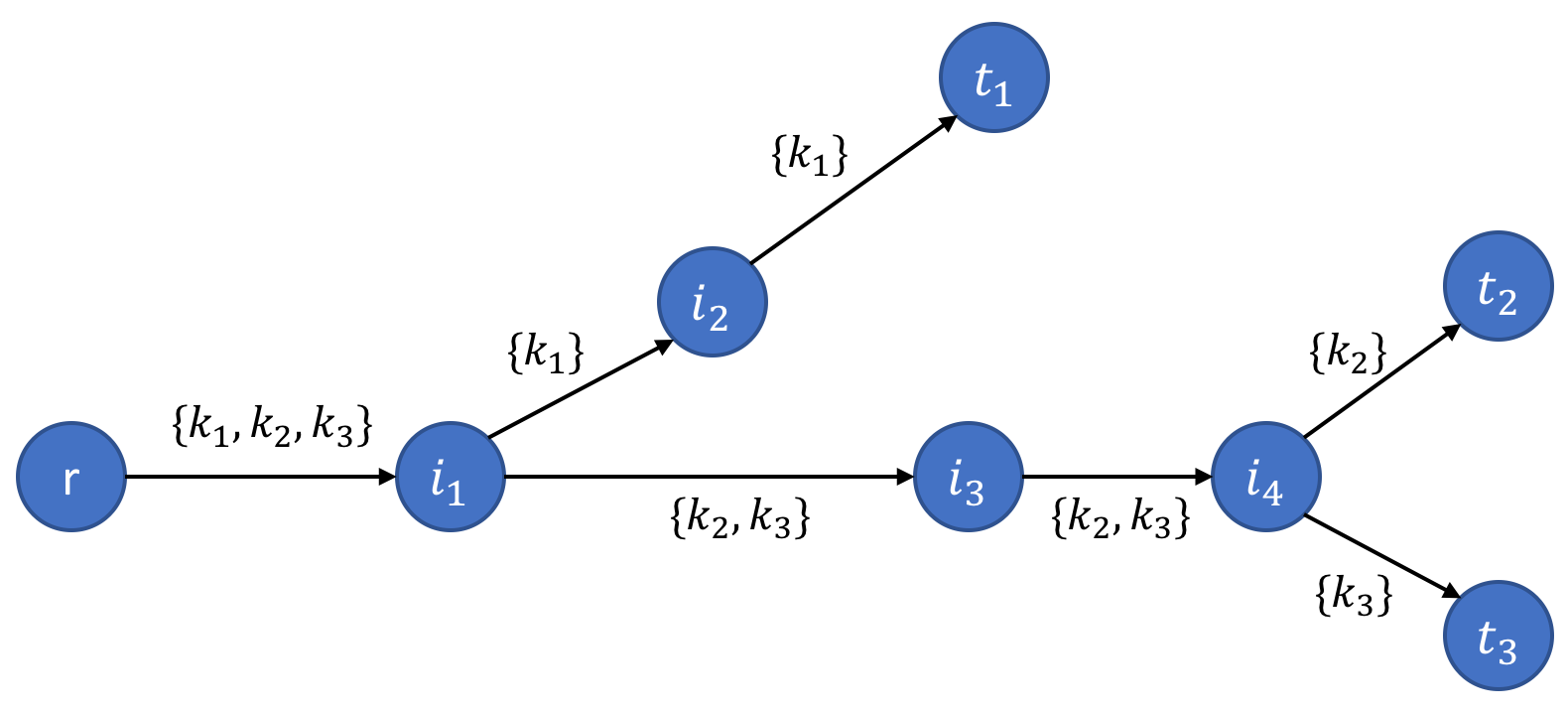}
  \caption{Steiner tree with $(r,l_3)$ structure, contained in support of $\phi(x)$.}
  \label{fig:Feas2}
\end{subfigure}
\caption{Feasible solution $x$ of $\mathcal{Q}_{l_1}$, whose mapping $\phi(x)$ is not an $(r,l_1)$ structured Steiner tree, and $(r,l_3)$ structured Steiner tree contained in support of $\phi(x)$.}
\label{Fig:Comparisson_Feas_Sol}
\end{figure}
Now, suppose $x\in\mathcal{Q}_l$ is acyclic and does not map to an $(r,l)$ structured Steiner tree. The mapping $\phi(x)$ tells us which edges of the original undirected graph $G$ are used by $x$. We can construct a subgraph defined by those edges, and we know there must be a Steiner tree in it, because in solution $x$ there is at least one path from $r$ to $t_k$ for all $k\in K$. Then, we can take any Steiner tree within the constructed subgraph, which will have a unique $(r,l^*)$ structure, with $l^*$ not necessarily equal to $l$. We can do this by taking any spanning tree in the constructed subgraph. For instance, Figure \ref{fig:Feas2} shows an $(r,l_3)$ structured Steiner tree within the support of $\phi(x)$ shown in Figure \ref{fig:Feas1}.\qed
\end{proof}

\subsection{Structure of feasible solutions of $\mathcal{Q}_l^{LP}$}\label{Sec:Structure_Q_LP}

The structure of the feasible solutions of $\mathcal{Q}_l^{LP}$, is similar to the solutions of $\mathcal{Q}_l^{IP}$, because they also have a non-cycle, and a cycle component. For simplicity, we use the same notation as in Section \ref{Sec:Structure_Q_IP}. We assume that $\mathcal{Q}_l^{LP}$ is feasible, and let $x=(f,w,\hat{y},\tilde{y})$ be an arbitrary solution of $\mathcal{Q}_l^{LP}$. Let $x=x_{nc}+x_c$, where $x_{nc}$ is the non-cycle component of $x$, and $x_c$ is the cycle component of $x$.

The analysis for this case is similar to the analysis in Section \ref{Sec:Structure_Q_IP}, but since the variables now can be fractional, we may have solutions where the non-cycle and the cycle component use the same arc $a$ for the same set $s$, but the sum of acyclic and cyclic components has to be at most 1 for all arcs and for all sets. Moreover, in the cycle component, we may have that two different cycles, for the same $s$, use the same arc $a$. Thus, the cycle part will be a collection of weighted cycles for each $s\in S(l)$, such that for every arc $a\in A$, the sum of acylic and cyclic flow in $a$ for $s$ is at most 1.

Let $LP(\mathcal{Z}_l)(\lambda)$ denote the formulation $LP(\mathcal{Z}_l)$, when we replace 1 by $\lambda\in (0,1]$ in constraints (\ref{eq4}), (\ref{eq5}), (\ref{eq7}), (\ref{eq12}), (\ref{eq13}), and  (\ref{eq14}). 

\begin{lemma}\label{Lemma:LP_Sol_Lambda}
For every $l\in \mathcal{L}_b$ and $\lambda\in(0,1]$, we have $LP(\mathcal{Z}_l)(\lambda)=\lambda \left[LP(\mathcal{Z}_l)(1)\right]$.
\end{lemma}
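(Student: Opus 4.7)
The statement is a pure scaling fact: the only inhomogeneity in $LP(\mathcal{Z}_l)$ comes from the equalities whose right-hand side is $1$ (together with the box constraints $\le 1$), and when all of these are simultaneously rescaled from $1$ to $\lambda$, the feasible polytope dilates by the same factor. My plan is therefore to prove a set-equality by two inclusions, each established through a direct constraint-by-constraint check.

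First I would show $\lambda\cdot\mathcal{Q}_l^{LP}(1)\subseteq \mathcal{Q}_l^{LP}(\lambda)$. Take any $x=(f,w,\hat{y},\overline{y})$ feasible for $LP(\mathcal{Z}_l)(1)$ and set $x':=\lambda x=(\lambda f,\lambda w,\lambda\hat{y},\lambda\overline{y})$. Constraints (\ref{eq1})--(\ref{eq3}) and (\ref{eq6}), (\ref{eq8}) are linear and homogeneous, so multiplying every variable by $\lambda$ preserves them. The inhomogeneous equalities (\ref{eq4}), (\ref{eq5}) and (\ref{eq7}) read $\sum_i w_i^p=1$, $\hat{y}_r^K=1$, $\overline{y}_{t_k}^{\,k}=1$; under the scaling, their left-hand sides become $\lambda\sum_i w_i^p$, $\lambda\hat{y}_r^K$, $\lambda\overline{y}_{t_k}^{\,k}$, each equal to $\lambda$, which is exactly the modified right-hand side in $LP(\mathcal{Z}_l)(\lambda)$. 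Finally, since $0\le f,w,\hat{y},\overline{y}\le 1$ and $\lambda\in(0,1]$, we have $0\le \lambda f,\lambda w,\lambda\hat{y},\lambda\overline{y}\le\lambda$, matching the rescaled box constraints in (\ref{eq12})--(\ref{eq14}). Hence $x'\in\mathcal{Q}_l^{LP}(\lambda)$.

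Next I would show the reverse inclusion $\mathcal{Q}_l^{LP}(\lambda)\subseteq \lambda\cdot\mathcal{Q}_l^{LP}(1)$. Since $\lambda>0$, division by $\lambda$ is well defined: given any feasible $x'$ for $LP(\mathcal{Z}_l)(\lambda)$, set $x:=x'/\lambda$. Running the same check in reverse, the homogeneous constraints are preserved, the three equalities rescale from $\lambda$ back to $1$, and the box constraints $0\le x'\le\lambda$ become $0\le x\le 1$. Thus $x\in\mathcal{Q}_l^{LP}(1)$ and $x'=\lambda x\in\lambda\cdot\mathcal{Q}_l^{LP}(1)$.

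Combining the two inclusions yields the claimed identity of feasible regions, which is exactly the statement of the lemma. There is no real obstacle here: the argument is bookkeeping that uses only the linearity of the constraints and the fact that \emph{every} right-hand side that is not zero in $LP(\mathcal{Z}_l)(1)$ is replaced by the same $\lambda$ in $LP(\mathcal{Z}_l)(\lambda)$, so the two polytopes are related by uniform scaling. The only point worth emphasizing when writing the full proof is that $\lambda>0$ is needed for the reverse direction (to invert the scaling), while $\lambda\le 1$ is needed to keep the scaled variables inside their box in the forward direction.
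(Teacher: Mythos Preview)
Your scaling argument is correct and is essentially the paper's own proof: the paper also scales a solution of one problem by $\lambda$ (respectively $1/\lambda$) to obtain a feasible solution of the other, deriving the two inequalities between optimal values. The only adjustment is that in the paper $LP(\mathcal{Z}_l)(\lambda)$ denotes the optimal \emph{value}, not the feasible polytope, so your last step should say that the feasible-region identity $\mathcal{Q}_l^{LP}(\lambda)=\lambda\,\mathcal{Q}_l^{LP}(1)$ together with linearity of the objective gives $\min\{c^Tx:x\in\mathcal{Q}_l^{LP}(\lambda)\}=\lambda\min\{c^Tx:x\in\mathcal{Q}_l^{LP}(1)\}$, which is the lemma as stated.
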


\begin{proof}
For any $\lambda\in(0,1]$, let $(f^\lambda,\hat{y}^\lambda,\overline{y}^\lambda,w^\lambda)$ denote an optimal solution for $LP(\mathcal{Z}_l)(\lambda)$. Then,
\begin{itemize}
\item $LP(\mathcal{Z}_l)(\lambda)\leq \lambda \left[LP(\mathcal{Z}_l)(1)\right]$. Since $(\lambda f^1,\lambda \hat{y}^1,\lambda \overline{y}^1,\lambda w^1)$ is a feasible solution to $LP(\mathcal{Z}_l)(\lambda)$, it holds that $LP(\mathcal{Z}_l)(\lambda)\leq \lambda \left[LP(\mathcal{Z}_l)(1)\right]$.
\item $LP(\mathcal{Z}_l)(\lambda)\geq \lambda \left[LP(\mathcal{Z}_l)(1)\right]$. Since $(\frac{1}{\lambda}f^\lambda,\frac{1}{\lambda}\hat{y}^\lambda,\frac{1}{\lambda}\overline{y}^\lambda,\frac{1}{\lambda}w^\lambda)$ is a feasible solution to $LP(\mathcal{Z}_l)(1)$, it holds that $\frac{1}{\lambda}\left[LP(\mathcal{Z}_l)(\lambda)\right]\geq LP(\mathcal{Z}_l)(1)$
\end{itemize}
Then, $LP(\mathcal{Z}_l)(\lambda)=\lambda \left[LP(\mathcal{Z}_l)(1)\right]$ for all $\lambda\in(0,1]$. \qed
\end{proof}

Our main result is stated in the following theorem.

\begin{theorem}\label{Thm:Proof_Integrality}
For all $l\in\mathcal{L}_b$, we have that $\text{Conv}(\mathcal{Q}_l^{IP})=\mathcal{Q}_l^{LP}$.
\end{theorem}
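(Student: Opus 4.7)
The inclusion $\text{Conv}(\mathcal{Q}_l^{IP})\subseteq\mathcal{Q}_l^{LP}$ is immediate, because $\mathcal{Q}_l^{IP}\subseteq\mathcal{Q}_l^{LP}$ and $\mathcal{Q}_l^{LP}$ is a convex polytope. The content of the theorem is the reverse inclusion $\mathcal{Q}_l^{LP}\subseteq\text{Conv}(\mathcal{Q}_l^{IP})$, which I plan to establish by showing that every $x\in\mathcal{Q}_l^{LP}$ is a finite convex combination of points of $\mathcal{Q}_l^{IP}$, using an iterative peeling argument powered by Lemma~\ref{Lemma:LP_Sol_Lambda}.

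The central step is to extract, from any $x\in\mathcal{Q}_l^{LP}$, a single $y\in\mathcal{Q}_l^{IP}$ that is coordinatewise dominated by $x$ in the sense that $y_v>0\Rightarrow x_v>0$ and $y_v<1\Rightarrow x_v<1$. My construction walks the laminar family $l$ top-down. Start at $r$ with $s=K$, and apply a path-flow decomposition of the acyclic component of $f^K$ (see Section~\ref{Sec:Structure_Q_LP}) to pick a single path $P_K$ of positive weight from $r$ to some node $i^{*}$ with $w^{P_l(K)}_{i^{*}}>0$; in $y$, put $1$ on the arcs of $P_K$ and set $w^{P_l(K)}_{i^{*}}=1$. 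Constraints~(\ref{eq3}) give $\hat{y}^{s'}_{i^{*}}=w^{P_l(K)}_{i^{*}}>0$ in $x$ for every child $s'$ of $P_l(K)$, so the same extraction can be invoked recursively starting from $i^{*}$ inside $f^{s'}$. Continuing until every singleton $\{k\}$ is routed to its sink (guaranteed by (\ref{eq7})) produces a feasible $y\in\mathcal{Q}_l^{IP}$ with the desired domination.

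Next, let $\lambda\in(0,1]$ be the largest value such that $x-\lambda y\ge 0$ and $(x-\lambda y)/(1-\lambda)\le 1$ hold componentwise. Every equation of $\mathcal{Q}_l^{LP}$ is linear with right-hand side in $\{0,1\}$, so $x-\lambda y$ satisfies the equalities of $LP(\mathcal{Z}_l)(1-\lambda)$; Lemma~\ref{Lemma:LP_Sol_Lambda} then yields $\tilde{x}:=(x-\lambda y)/(1-\lambda)\in\mathcal{Q}_l^{LP}$. By the choice of $\lambda$, at least one coordinate of $\tilde{x}$ that was strictly fractional in $x$ has now been pinned to $\{0,1\}$, so by induction on the number of strictly fractional coordinates, $\tilde{x}\in\text{Conv}(\mathcal{Q}_l^{IP})$, and therefore $x=\lambda y+(1-\lambda)\tilde{x}\in\text{Conv}(\mathcal{Q}_l^{IP})$.

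The obstacle I expect to be most delicate is the cycle component $x_c$ identified in Section~\ref{Sec:Structure_Q_LP}: since the $y$ produced above is acyclic, subtracting $\lambda y$ never reduces $f_c$, and the induction measure need not decrease on cyclic coordinates. My plan is to dispose of cycles up front by an exchange argument. If some $f^s$ carries a cycle $C$ whose arcs are all strictly fractional, then perturbing $f^s$ by $\pm\epsilon$ on the arcs of $C$ preserves flow conservation and every other constraint, so $x$ is the midpoint of two feasible LP points each with strictly smaller cycle support; iterating this reduction brings us to the case $x_c=0$, where the top-down extraction described above applies without obstruction.
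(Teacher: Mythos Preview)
Your overall strategy is genuinely different from the paper's. The paper never peels off a full integral solution; instead it takes an \emph{optimal} extreme point $x^*$ and splits on the variable $w$: if $w^*$ is integral, each $f^{*s}$ lives in an independent single-commodity $i_s$--$j_s$ path polytope (a TU system), so extremality forces $f^*$ integral; if $w^*$ is fractional, then at the largest set $\hat s$ with $\hat y^{\hat s}$ integral but $\overline y^{\hat s}$ fractional, optimality together with Lemma~\ref{Lemma:LP_Sol_Lambda} forces $x^*=\sum_{i\in I(\hat s)}\lambda_i\bigl[x^*(r,i,\hat s)+\sum_{s\in S_l(\hat p)}x^*(i,s)\bigr]$, a nontrivial convex combination of feasible points, contradicting extremality. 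Optimality is doing real work there, whereas your argument must handle \emph{every} LP point. (A small aside: Lemma~\ref{Lemma:LP_Sol_Lambda} is a statement about optimal \emph{values}; what you actually need in your rescaling step is the simpler fact that the feasible region of $LP(\mathcal Z_l)(1-\lambda)$ is $(1-\lambda)\cdot\mathcal Q_l^{LP}$, which is immediate from the form of the constraints.)

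The real gap is your cycle paragraph. The sentence ``$x$ is the midpoint of two feasible LP points each with strictly smaller cycle support'' is false: $x+\epsilon\,\mathbf 1_C$ has the same support as $x$, and only the $-\epsilon$ direction can kill an arc. More seriously, you only treat cycles whose arcs are \emph{all} strictly fractional. A cycle $C$ in $f^s$ can pass through an arc $a$ with $f^s_a=1$ (for instance when acyclic and cyclic flow overlap on $a$, or when the entire cycle is saturated); then $x+\epsilon\,\mathbf 1_C$ is infeasible and your midpoint argument is unavailable. Worse, in exactly this situation your top-down path extraction can output $y$ with $y^s_a=0$ while $x^s_a=1$, so the domination condition $y_v<1\Rightarrow x_v<1$ fails and no $\lambda>0$ exists for the rescaling. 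One way to repair this is to abandon the induction on fractional coordinates and argue only about extreme points, splitting on whether $w$ is integral exactly as the paper does: when $w$ is integral the $f^s$ polytopes are TU and you are done (cycles included), and when $w$ is fractional your peeling produces $x=\lambda y+(1-\lambda)\tilde x$ with $y\neq x\neq\tilde x$, contradicting extremality---but you still need to show that in this case the extraction can be made to respect saturated arcs, which your current write-up does not do.
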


\begin{proof}
We will prove that for all cost vectors $c$, there exists an optimal solution to $LP(\mathcal{Z}_l)$ that is integral.

The following definitions will be used in the proof. For all $i\in V,s\in S(l)$, let $LP(\mathcal{Z}_l(i,s))$ be a subformulation of $LP(\mathcal{Z}_l)$, where the root is $i$ (instead of $r$), the terminal nodes are $t_k$ for all $k\in s$, and the splitting sequence is defined by the way $s$, and its subsets, are split in laminar family $l$. The cost vector of $LP(\mathcal{Z}_l(i,s))$ is the same as the one used in $LP(\mathcal{Z}_l)$, but only considering the components of $LP(\mathcal{Z}_l)$ that are present in $LP(\mathcal{Z}_l(i,s))$.  Let $x^*(i,s)$ be an optimal solution to $LP(\mathcal{Z}_l(i,s))$.

Let $x^*=(f^*,\hat{y}^*,\overline{y}^*,w^*)$ be an optimal solution of $LP(\mathcal{Z}_l)$. Note that if $w^*$ is integer, then $\hat{y}^*$ and $\overline{y}^*$ are integer because of constraints (\ref{eq4})-(\ref{eq8}). This implies, that for all $s\in S(l)$, there is only one non-zero component in vectors $\hat{y}^s$ and $\overline{y}^s$, which is equal to 1. Let $i_s$ and $j_s$ be those indexes, respectively. Now, for all $s\in S(l)$, constraints (\ref{eq1}) and (\ref{eq9}) correspond to the shortest $i_s$-$j_s$ path polytope, which has only integer extreme points. Since each of these polytopes is independent of each other, we have that $f^*$ is integer. Therefore, if $w^*$ is integer, then $x^*$ is integer.

Now, suppose that $w^*$ is fractional. This implies, that for at least one set $s\in S(l)$, $\overline{y}^s$ is fractional. Note that, we may have that $\hat{y}^s$ is also fractional, but for at least one $s\in S(l)$, we claim that $\hat{y}^s$ is integer, and $\overline{y}^s$ is fractional. The justifications for this claim is the following. Suppose the partitions in $P(l)$ are ordered in decreasing order by the cardinality of the set they split, i.e., the first element of $P(l)$ splits $K$, and the last set of $P(l)$ splits a set of cardinality 2. Now, consider $w^*$, and let $\dot{p}$ be the first element in $P(l)$ such that $w^{\dot{p}}$ is fractional. Let $\dot{s}$ be the set that $\dot{p}$ splits. Then, it is clear that $\hat{y}^{\dot{s}}$ is integer and $\overline{y}^{\dot{s}}$ is fractional. 

Let $\hat{s}$ be the largest set in $S(l)$ such that $\hat{y}^{\hat{s}}$ is integral, and $\overline{y}^{\hat{s}}$ is fractional, and for simplicity of the argument, suppose $\hat{s}=K$. Let $I(\hat{s})$ be the set of indexes, such that $\overline{y}^{\hat{s}}_i=\lambda_i>0$ for all $i\in I(\hat{s})$, and $\sum_{i\in I(\hat{s})}\lambda_i=1$. Let $\hat{p}\in P(l)$, be the partition that splits $\hat{s}$ in laminar family $l$. By constraint (\ref{eq2}) we have that $w_i^{\hat{p}}=\lambda_i$ for all $i\in I(\hat{s})$, as shown in Figure \ref{Thm1:Fig_1}.

\begin{figure}[H]
\begin{center}
\includegraphics*[width=0.5\textwidth]{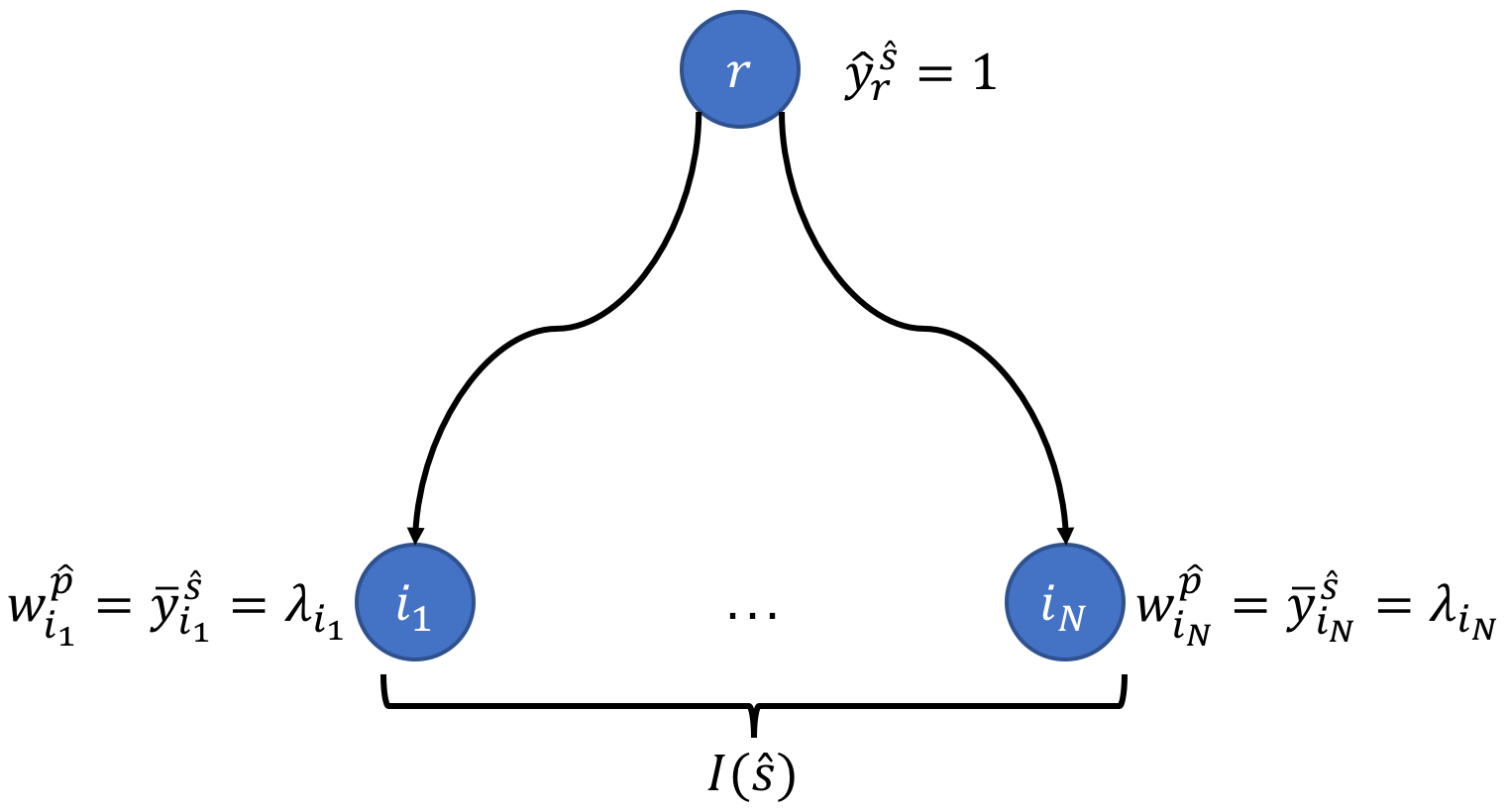}
\caption{Solution illustration.}
\label{Thm1:Fig_1}
\end{center}
\end{figure}

For all $i\in I(\hat{s})$ and $s\in S_l(\hat{p})$\footnote{Recall that $S_l(p)$ is the set of all child subsets of $p$ in laminar family $l$.}, by constraint (\ref{eq3}), we have that $\hat{y}^s_i=\lambda_i$. Now, take a fixed $s\in S_l(\hat{p})$, and a fixed $i\in I(\hat{s})$. Because $\hat{y}^s_i=\lambda_i$, then we know that within solution $x^*$, we are sending $\lambda_i$ units of flow from node $i$ to $\{t_k\}_{k\in s}$. Since $x^*$ is optimal, then by Lemma \ref{Lemma:LP_Sol_Lambda}, that part of the solution has to correspond to $\lambda_i x^*(i,s)$, otherwise, we can improve solution $x^*$. On the other hand, within solution $x^*$ we are sending $\lambda_i$ units of flow from $r$ to $i$, for all $i\in I(\hat{s})$. Let $x^*(r,i,\hat{s})$ be the lowest cost solution sending 1 unit of flow from $r$ to $i\in I(\hat{s})$ for set $\hat{s}$, then, by Lemma \ref{Lemma:LP_Sol_Lambda}, $\lambda_i x^*(r,i,\hat{s})$ corresponds to the lowest cost solution sending $\lambda_i$ units of flow from $r$ to $i\in I(\hat{s})$ for set $\hat{s}$, otherwise we can improve solution $x^*$. Figure \ref{Thm1:Fig_2} shows a representation of the last statement.
\begin{figure}[H]
\begin{center}
\includegraphics*[width=0.6\textwidth]{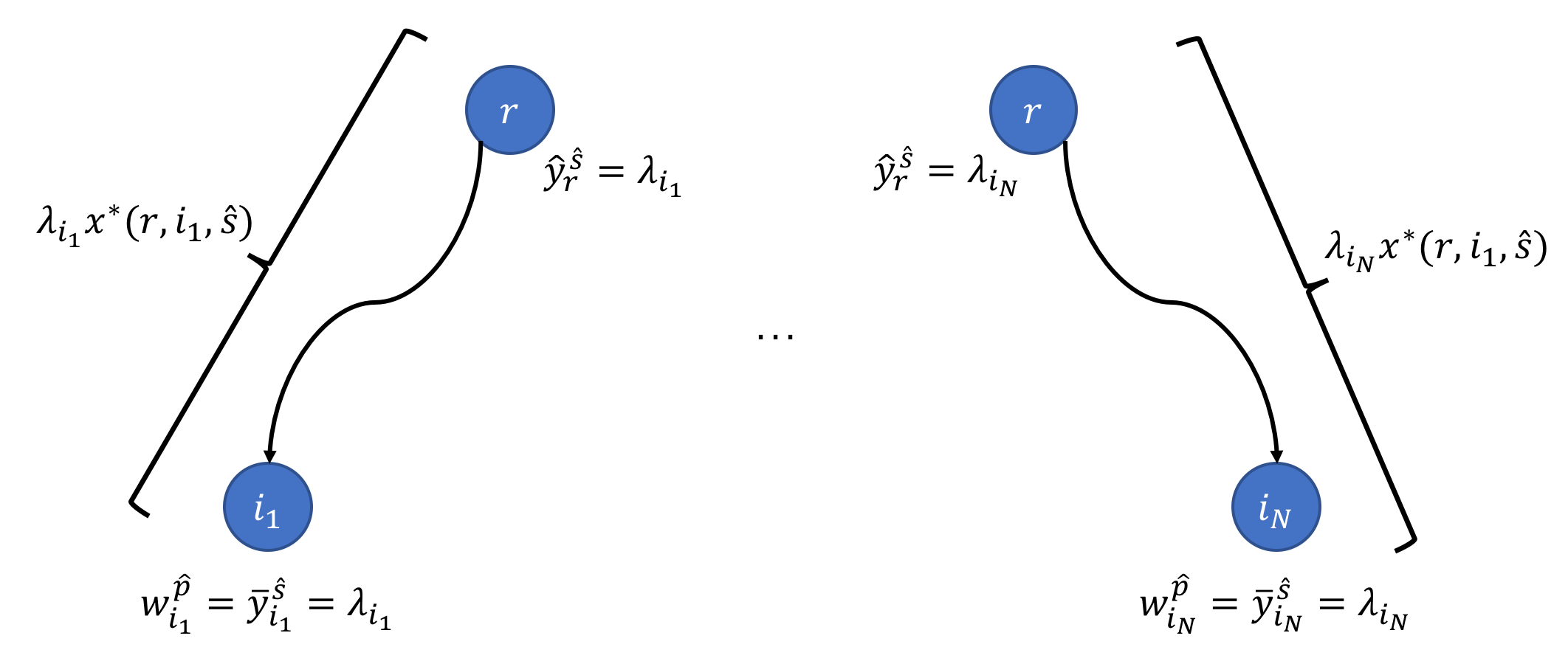}
\caption{Solution illustration.}
\label{Thm1:Fig_2}
\end{center}
\end{figure}
Putting everything together, we have
\begin{align*}
x^* &= \sum_{i\in I(\hat{s})}\lambda_i\left[x^*(r,i,\hat{s})+\sum_{s\in S_l(\hat{p})}x^*(i,s)\right]
\end{align*}
But, for all $i\in I(\hat{s})$, $x^*(r,i,\hat{s})+\sum_{s\in S_l(\hat{p})}x^*(i,s)$ is a feasible solution. Then, $x^*$ is a convex combination of $\{x^*(r,i,\hat{s})+\sum_{s\in S_l(\hat{p})}x^*(i,s)\}_{i\in I(\hat{s})}$, and therefore, it is not an extreme point. Note that, $x^*(r,i,\hat{s})$ and $\sum_{s\in S_l(\hat{p})}x^*(i,s)$ may contain cycles. Furthermore, if $\hat{s}\ne K$, we can use the same argument, but we have to consider the solution of all the sets that contain $\hat{s}$ as a subset. By definition of $\hat{s}$, in the solution of all those sets, $\hat{y}$ and $\overline{y}$ will be integers, and therefore, the entire solution of all those sets has to be integer. Thus, we can include that part of the solution in $x^*(r,i,\hat{s})+\sum_{s\in S_l(\hat{p})}x^*(i,s)$ for all $i\in I(\hat{s})$, and use the same argument as before. Consequently, all extreme point solutions have $\hat{y},\overline{y}$, and $w$ integral, and therefore, all extreme points of $\mathcal{Q}_l^{LP}$ are integral. \qed
\end{proof}

As a corollary, we can solve the entire Steiner tree problem by optimizing each laminar family subproblem independently, and taking the solution with the lowest cost. For simplicity of notation, we define $\mathcal{Q}_l=\mathcal{Q}_l^{IP}$, which are the extreme points of $\mathcal{Q}_l^{LP}$.

\begin{corollary}\label{Corollary:Sol_Entire_Problem}
Let $c\geq 0$ be the cost vector of a Steiner tree problem instance. Let $x^l=\argmin\{c^Tx:x\in\mathcal{Q}_l\}$, and $v^l=c^Tx^l$ for all $l\in\mathcal{L}_b$. Then, $\phi(x^{l^*})$ is a minimum cost Steiner tree, where $l^*=\argmin\{v^l:l\in\mathcal{L}_b\}$.
\end{corollary}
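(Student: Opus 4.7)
The plan is to sandwich $v^{l^*}$ between two quantities: (i) show that for every Steiner tree $T$ in $G$ there is some $l \in \mathcal{L}_b$ admitting a feasible $x \in \mathcal{Q}_l$ with $c^{T}x$ equal to the cost of $T$; and (ii) show that from $x^{l^*}$ we can extract a Steiner tree of cost at most $v^{l^*}$. Applied to an optimal Steiner tree $T^{opt}$, (i) gives $v^{l^*} \leq v^{l(T^{opt})} \leq c(T^{opt})$, and (ii) produces a Steiner tree of cost at most $v^{l^*}$ inside $\phi(x^{l^*})$, forcing equality.

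For step (i), I would take an arbitrary Steiner tree $T$, prune any non-terminal leaves so every leaf of the result is a terminal, and root at $r$ to obtain an $r$-arborescence $\vec T$ with leaf set $R\setminus\{r\}$. For each node $v$ of $\vec T$, let $K(v) \subseteq K$ be the commodities whose sinks are descendants of $v$. The collection $\{K(v) : v \in \vec T\}$ is a laminar family $l \in \mathcal{L}_b$ containing $K$ (at $v = r$) and all singletons (at the leaves). An integer feasible $x \in \mathcal{Q}_l$ can then be read off $\vec T$: set $f_a^s = 1$ on every arc $a$ of $\vec T$ whose shared commodities are precisely $s$, let $\hat y^s$ and $\overline y^s$ indicate the first and last nodes of the shared sub-path of $s$, and let $w^p$ indicate the node at which $s$ splits into the children $S_l(p)$. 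Constraints (\ref{eq1})-(\ref{eq8}) follow directly from the arborescence structure and $c^T x = c(T)$, so $v^{l^*} \leq v^l \leq c(T)$.

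For step (ii), I would first argue that $x^{l^*}$ can be taken acyclic: by the decomposition $x = x_{nc}+x_c$ of Section \ref{Sec:Structure_Q_IP}, cycle removal leaves $w, \hat y, \overline y$ untouched and preserves feasibility, while the nonnegativity of $c$ ensures it does not increase $c^{T}x$. Proposition \ref{Prop:Integer_Sol} then implies that $\phi(x^{l^*})$ is either itself an $(r,l^*)$-structured Steiner tree or contains some Steiner tree $T^\star$ in its support. Using $c_a = c_e$ for $a \in A(e)$ together with the definition of $\phi$,
\begin{equation*}
c(T^\star) \;\leq\; \sum_{e \in E} c_e \chi_e \;\leq\; \sum_{a \in A} c_a \sum_{s \in S(l^*)} f_a^s \;=\; v^{l^*}.
\end{equation*}
Combined with (i) applied to $T^{opt}$, this yields $c(T^\star) \leq v^{l^*} \leq c(T^{opt})$, so $T^\star$ attains the optimum.

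The main obstacle is step (i): one must verify carefully that $\vec T$ really does induce a family in $\mathcal{L}_b$ and that the proposed integer assignment satisfies every constraint of $\mathcal{Z}_l$. The subtlety lies in handling degenerate splits when a terminal is an internal node of $\vec T$ (so the singleton commodity at that terminal starts and ends at the same node) and empty shared sub-paths (i.e., $i_s = j_s$, in which case flow conservation (\ref{eq1}) must be satisfied with $f^s \equiv 0$). Once this construction is verified, the remainder of the argument is routine bookkeeping.
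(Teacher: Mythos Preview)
Your approach is essentially the paper's, made more explicit. Step~(ii) is exactly what the paper does: nonnegative costs force $x^{l^*}$ to be acyclic, then Proposition~\ref{Prop:Integer_Sol} produces a Steiner tree inside $\phi(x^{l^*})$. Step~(i) spells out in detail the direction the paper only asserts in passing in Section~\ref{Sec:Structure_Q_IP} (``all Steiner trees with $(r,l)$ structure have a corresponding acyclic feasible solution to $\mathcal{Z}_l$'').

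One loose end: you finish by showing that the tree $T^\star\subseteq\phi(x^{l^*})$ is optimal, but the corollary claims that $\phi(x^{l^*})$ \emph{itself} is a minimum-cost Steiner tree. The paper closes this by an optimality argument you have all the pieces for: if $\phi(x^{l^*})$ were not already a tree, then $T^\star$ would use a strict subset of its edges, so $c(T^\star)<\sum_e c_e\chi_e\le v^{l^*}$; applying your step~(i) to $T^\star$ gives some $l'$ with $v^{l'}\le c(T^\star)<v^{l^*}$, contradicting the choice of $l^*$. Add that sentence and the proof is complete. (Both your argument and the paper's tacitly need the extra edge to have strictly positive cost for this contradiction; with zero-cost edges one only gets that $\phi(x^{l^*})$ has optimal cost and contains an optimal tree.)
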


\begin{proof}
Note that, since the cost is non-negative, then the optimal solution for each subproblem is acyclic. By Proposition \ref{Prop:Integer_Sol}, we know that $\phi(x^{l^*})$ is either an $(r,l^*)$ structured Steiner tree, or $\phi(x^{l^*})$ contains a Steiner tree, with a different structure. But, since the cost vector is non-negative, then $\phi(x^{l^*})$ has to be a Steiner tree, otherwise, we can take any Steiner tree contained in the support of $\phi(x^{l^*})$, which will have a lower cost since it uses a subset of the edges used by $\phi(x^{l^*})$. Therefore, $\phi(x^{l^*})$ is a minimum cost Steiner tree. \qed
\end{proof}

Note that using a standard disjunctive programming argument, we can write an extended formulation for the entire problem based on the formulations for each laminar family \cite{balas1998disjunctive,Conforti2014IntProg}.

\subsection{Problem size}

The downside of our proposed formulation is the number of laminar families, which grows very fast as the number of terminals increase. In this section we provide an upper bound on the number of laminar families that we have to consider, and we show that this upper bound is tight.

In the following proposition, we use the notion of a {\it full binary} tree. Note that, a tree $T$ is called a {\it full binary} tree if every node in $T$ has either zero or two children.

\begin{proposition}\label{Prop:Number_Lam_Families}
It suffices only to consider laminar families in $\mathcal{L}_b$ whose tree representation corresponds to a full binary tree.
\end{proposition}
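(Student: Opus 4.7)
The plan is to show that whenever a laminar family $l\in\mathcal{L}_b$ has a tree representation that is not full binary, we can construct another laminar family $l'\in\mathcal{L}_b$ whose tree representation is full binary and whose optimal value $\mathcal{Z}_{l'}$ is no larger than $\mathcal{Z}_l$. Since Corollary \ref{Corollary:Sol_Entire_Problem} takes the minimum over all laminar families, discarding every non-full-binary $l$ cannot change the overall optimum.

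First, I would identify the structural witness: if $l$ is not full binary, some internal node of its tree representation has $k\geq 3$ children, meaning there is a set $\hat{s}\in S(l)$ that is split by a partition $\hat{p}\in P(l)$ into child sets $s_1,s_2,\ldots,s_k$ with $k\geq 3$. The construction of $l'$ then replaces the single $k$-way partition $\hat{p}$ by a cascade of $k-1$ binary partitions: split $\hat{s}$ into $\{s_1,\,s_2\cup\cdots\cup s_k\}$, then split $s_2\cup\cdots\cup s_k$ into $\{s_2,\,s_3\cup\cdots\cup s_k\}$, and so on down to the final binary split $\{s_{k-1},s_k\}$. The intermediate unions $s_j\cup\cdots\cup s_k$ for $2\le j\le k-1$ are new sets added to $S(l')$, and they are proper subsets of $\hat{s}$ containing each $s_i$ with $i\ge j$, so laminarity is preserved. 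Repeating this surgery at every non-binary internal node of the tree yields a full binary tree representation in finitely many steps.

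Next I would exhibit a cost-preserving map between feasible solutions. Given $x=(f,w,\hat{y},\overline{y})$ feasible for $\mathcal{Z}_l$, let $i^*$ be the unique node at which $\hat{p}$ occurs, i.e.\ $w^{\hat{p}}_{i^*}=1$. Build $x'$ for $\mathcal{Z}_{l'}$ by copying all flow and indicator variables for sets that remain in $S(l')\cap S(l)$, and, for each new intermediate set $\tilde{s}=s_j\cup\cdots\cup s_k$, setting $\hat{y}^{\tilde{s}}_{i^*}=\overline{y}^{\tilde{s}}_{i^*}=1$ and $f^{\tilde{s}}_a=0$ for all $a\in A$, while placing every newly introduced binary partition $p$ at node $i^*$ by setting $w^{p}_{i^*}=1$. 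By construction each intermediate set starts and ends at $i^*$ (so its flow-conservation constraint (\ref{eq1}) reads $0=0$), constraints (\ref{eq2})–(\ref{eq4}) for the new partitions are satisfied because all of them occur at $i^*$, and constraints (\ref{eq5})–(\ref{eq8}) are unaffected since the root and sinks of existing commodities are not touched. The objective is identical because the new flow variables are all zero. Hence $\mathcal{Z}_{l'}\le \mathcal{Z}_l$, and the reverse inequality is irrelevant for the argument.

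The main obstacle, and the only point requiring care, is verifying laminarity of the enlarged family and checking that every constraint of $\mathcal{Z}_{l'}$ is indeed satisfied by the constructed $x'$—in particular, that placing all the new partitions at the common node $i^*$ does not conflict with the constraints (\ref{eq2})–(\ref{eq3}) linking $w$, $\hat{y}$, and $\overline{y}$ along the cascade. This is straightforward because each intermediate set's role in (\ref{eq2}) is to demand that its splitting partition occur where it ends (namely $i^*$), and its role in (\ref{eq3}) is to demand it begin where its parent's partition occurred (again $i^*$); both are consistent with our assignment. Iterating this reduction over every non-binary internal node completes the proof.
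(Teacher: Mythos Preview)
Your proposal is correct and follows essentially the same approach as the paper's proof: identify a set $\hat{s}$ split into $k\ge 3$ children, insert intermediate union sets that start and end at the same splitting node $i^*$ with zero flow, and observe that the resulting feasible solution has identical cost, so the refined (binary) laminar family has optimal value no larger. The only cosmetic difference is that the paper groups $s_1\cup s_2$ first (treating $t=3$ and then saying ``same argument holds for $t>3$''), whereas you build the cascade from the other side via $s_2\cup\cdots\cup s_k$, $s_3\cup\cdots\cup s_k$, etc.; both constructions are equivalent, and your explicit verification of constraints (\ref{eq1})--(\ref{eq4}) for the new sets is slightly more thorough than the paper's.
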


\begin{proof}
Let $l_1$ be a laminar family in $\mathcal{L}_b$ and let $T(l_1)$ be the tree representation of $l_1$, such that $T(l_1)$ is not a full binary tree. Therefore, there must be a node in $T(l_1)$ that has at least 3 child nodes, which implies that there must be a set in $l_1$ that is partitioned into at least 3 sets, let $\hat{s}$ be such set. Suppose that the {\it child} sets of $\hat{s}$ in $l_1$ are $s_1,s_2,\ldots,s_t$ with $t\geq 3$. For simplicity, assume $t=3$. Now, let $l_2$ be the laminar family in $\mathcal{L}_b$ such that $S(l_2)=S(l_1)\cup{s'}$ where $s'=s_1\cup s_2$. This means that all partitions in $l_1$ and $l_2$ are the same, but in $l_1$, $\hat{s}$ is partitioned into $s_1,s_2,s_3$, and in $l_2$, $\hat{s}$ is partitioned into $s'$ and $s_3$, and $s'$ is partitioned into $s_1$ and $s_2$. Now, let $x_1=(f_1,w_1,\hat{y}_1,\overline{y}_1)$ be an optimal solution to $\mathcal{Z}_{l_1}$. We claim that we can construct a feasible solution to $\mathcal{Z}_{l_2}$ using $x_1$. Let $i^*$ be the node where $\hat{s}$ is split in $x_1$, then we define $x_2=(f_2,w_2,\hat{y}_2,\overline{y}_2)\in \mathcal{Q}_{l_2}$ as follows,
\begin{align*}
(f_2)_a^s &= \left\{\begin{array}{ll}
(f_1)_a^s & \forall a \in A, s\in S(l_1)\\
0 & \forall a\in A, s=s'
\end{array}
\right.\\
(\hat{y}_2)_i^s &= \left\{\begin{array}{ll}
(\hat{y}_1)_i^s& \forall i \in V, s\in S(l_1)\\
1 &  i=i^*, s=s'\\
0 & \forall i\in V\backslash\{i^*\}, s=s'
\end{array}
\right.\\
(\overline{y}_2)_i^s &= \left\{\begin{array}{ll}
(\overline{y}_1)_i^s& \forall i \in V, s\in S(l_1)\\
1 &  i=i^*, s=s'\\
0 & \forall i\in V\backslash\{i^*\}, s=s'
\end{array}
\right.
\end{align*}
Note that the values of the $w_2$ vector will be determined by the values of $\hat{y}_2$ and $\overline{y}_2$. Moreover, $x_2$ is a feasible solution to $\mathcal{Z}_{l_2}$, whose cost is the same as the cost of $x_1$. Hence, $\mathcal{Z}_{l_2}\leq \mathcal{Z}_{l_1}$. The same argument holds for $t>3$ as well. Consequently, we only need laminar families where each non-singleton set is partitioned into two proper subsets, which correspond to laminar families whose tree representation is a full binary tree. 
\qed
\end{proof}

Using Proposition \ref{Prop:Number_Lam_Families}, we can reduce the number of laminar families by considering only laminar families whose tree representations are full binary trees. For simplicity of notation, we denote by $\mathcal{L}_b$ the reduced set of laminar families, when we have $b+1$ terminals. We are interested in expressing $|\mathcal{L}_b|$ as a function of $b$. Note that, $|\mathcal{L}_b|$ is equivalent to the number of full binary labeled trees with $b$ leaves. It is known that (see \cite{billera2001geometry}, and  \cite{stanley1999enum} Chapter 5.2.6).
\begin{align*}
|\mathcal{L}_b| = (2b-3)!! = \frac{(2(b-1))!}{2^{b-1}(b-1)!}
\end{align*}
Note, however, that $|\mathcal{L}_b|$ is not always achieved, since there are graphs that do not yield some laminar families of $\mathcal{L}_b$, which is why $\frac{(2(b-1))!}{2^{b-1}(b-1)!}$ is an upper bound on the number of laminar families to consider. Now, recall Stirling's formula, which gives lower and upper bounds for $n!$. For any $n\in\Z^+$ we have
\begin{align*}
\sqrt{2\pi}n^{n+\frac{1}{2}}e^{-n}\leq n!\leq en^{n+\frac{1}{2}}e^{-n}
\end{align*}
Then, 
\begin{align*}
|\mathcal{L}_b| &= \frac{(2(b-1))!}{2^{b-1}(b-1)!}\\
&\leq \frac{e(2(b-1))^{2(b-1)+\frac{1}{2}}e^{-2(b-1)}}{2^{b-1}\sqrt{2\pi}(b-1)^{(b-1)+\frac{1}{2}}e^{-(b-1)}}\\
&= \frac{e2^{(b-1)}(b-1)^{(b-1)}e^{-(b-1)}}{\sqrt{\pi}}\\
&= \frac{e}{\sqrt{\pi}}\left(\frac{2(b-1)}{e}\right)^{(b-1)}
\end{align*}
We conclude that $|\mathcal{L}_b|$ grows as $\mathcal{O}\left(\left(\frac{2b}{e}\right)^{b}\right)$. Although this is a big number, it is small compared to the total number of laminar families. In fact, it can be shown that as $b$ increases, the ratio between the number of laminar families corresponding to full binary trees and the total number of laminar families, goes to 0.

Now, we analyze the size of each subproblem $LP(\mathcal{Z}_l)$ for $l\in\mathcal{L}_b$. By Proposition \ref{Prop:Number_Lam_Families}, we use laminar families whose tree representation $T(l)$ is a full binary tree. An important property of full binary trees is that, the number of internal nodes is $L-1$, where $L$ is the number of leaves. In our case, there are $b$ leaves, which implies $b-1$ internal nodes, and $2b-1$ nodes in the entire tree. This implies that $|S(l)|=2b-1$ for all $l\in\mathcal{L}_b$. Moreover, the number of non-singleton sets corresponds to the number of internal nodes in $T(l)$ which is $b-1$, and also is the number of partitions in $l$. Finally, since we only consider $l$ such that $T(l)$ is a full binary tree, the number of {\it child} sets of each partition is exactly 2. Using these facts, the number of variables and constraints are the same for all $l\in\mathcal{L}_b$, and the number of variables is  $\mathcal{O}(nb+mb)$ and the number of constraints is $\mathcal{O}(nb+b)$, where $n=|V|$, $m=|A|$ and $b+1=|R|$. Therefore, $LP(\mathcal{Z}_l)$ is of polynomial size in the input data. Consequently, we have that each subproblem is polynomially solvable, but the number of subproblems grow as $\mathcal{O}\left(\left(\frac{2b}{e}\right)^{b}\right)$, which implies that the problem is fixed-parameter tractable with respect to the number of terminal nodes. This last remark is consistent with previous results \cite{dreyfus1971steiner,feldmann2017complexity}.

\section{Computational Results}\label{Results}

We provide computational results to evaluate the performance of our proposed formulation. We use instances from the SteinLib library \cite{koch2001steinlib}. Table \ref{Table_Running_Times} shows the number of laminar families as a function of the number of terminal nodes, and it shows how long it will take to solve the entire problem if each subproblem takes one second to execute, and they are solved sequentially.
\begin{table}[H]
\begin{center}
\begin{tabular}{c r r}
\hline
	Number of terminals & $|\mathcal{L}_b|$ & Running time \\  \hline
	3 & 1 & 1 seconds \\
	4 & 3 &  3 seconds \\
	5 & 15 & 15 seconds \\
	6 & 105 & 1.75 minutes \\
	7 & 945 & 15.75 minutes \\
	8 & 10,395 & 2.89 hours \\
	9 & 135,135 & 37.54 hours \\
	10 & 2,027,025 & 23.46 days \\
	11 & 34,459,425 & 1.09 years \\
	 \hline
\end{tabular}
\end{center}
\caption{Number of terminals, maximum number of laminar families, and total execution time if each subproblem takes 1 second to solve.}
\label{Table_Running_Times}
\end{table}
As we can see from Table \ref{Table_Running_Times}, the expected time to run instances with 9 or more terminals will be very large, even if the execution time for each subproblem is 1 second. Thus, we have only run experiments on instances with at most 8 terminal nodes.

Our code is implemented in {\it Python} using {\it Gurobi} 7.5.1 as a solver. All tests were performed on a laptop with an {\it Intel Core i7} (3.3 GHz) processor, and 16 GB of memory, using the {\it macOS Sierra} operating system. As we previously discussed, we can solve each laminar family subproblem independently, and therefore, they can be solved in parallel, but we implemented our code such that we run each subproblem sequentially.

Table \ref{Table_Results} summarizes the results. The first two columns correspond to the test set and instance name, the third column shows the number of nodes, the fourth column shows the number of arcs, the fifth and the sixth columns present the number of terminal nodes and total number of laminar families. Finally, the last two columns present the average execution time for each subproblem, and the total execution time for all the models, respectively. All the reported times are in seconds.
\begin{table}[H]
\begin{center}
\begin{tabular}{c r r r c r c r}
\hline
	Test set & Instance & $|V|$ & $|A|$ & $|R|$ & $|\mathcal{L}_b|$ & Subproblem time & Entire problem time \\  \hline
	LIN & {\it lin01} & 53 & 160 & 4 & 3 & 0.003 s & 0.01 s\\
	LIN & {\it lin02} & 55 & 164 & 6 & 105 & 0.005 s & 0.48 s\\
	LIN & {\it lin04} & 157 & 532 & 6 & 105 & 0.021 s & 2.18 s\\
	LIN & {\it lin07} & 307 & 1,052 & 6 & 105 & 0.060 s & 6.26 s\\
	I160 & {\it i160-033} & 160  & 640 & 7 & 945 & 0.016 s & 15.43 s\\
	I160 & {\it i160-043} & 160 & 5,088 & 7 & 945 & 0.151 s & 142.63 s\\
	I160 & {\it i160-045} & 160 & 5,088 & 7 & 945 & 0.164 s & 154.74 s\\
	LIN & {\it lin03} & 57 & 168 & 8 & 10,395 & 0.007 s & 71.26 s\\
	PUC & {\it cc3-4p} & 64 & 576 & 8 & 10,395 & 0.018 s & 190.89 s\\
	PUC & {\it cc3-4u} & 64 & 576 & 8 & 10,395 & 0.019 s & 192.65 s\\
	I320 & {\it i320-011} & 320 & 3,690 & 8 & 10,395 & 0.197 s & 1,806.00 s \\
	I320 & {\it i320-043} & 320 & 20,416 & 8 & 10,395 & 0.910 s & 9,468.00 s \\
	\hline
\end{tabular}
\end{center}
\caption{Instance details and execution times in seconds.}
\label{Table_Results}
\end{table}

\section{Conclusions and Future Work}\label{Conclusions}

We propose an LP based approach to the Steiner tree problem. The proposed approach consists of solving a set of independent IPs. The best solution among the set of IPs corresponds to an optimal Steiner tree. Each IP is polynomial in the size of the underlying graph, and we prove that the LP relaxation of each IP is integral, so each IP can be solved as a linear program. The main issue is that the number of IPs to solve grows as $\mathcal{O}\left(\left(\frac{2b}{e}\right)^{b}\right)$ where $b+1$ is the number of terminal nodes. Consequently, we are able to solve the Steiner tree problem by solving a polynomial number of LPs, when the number of terminals is fixed. This is consistent with previous results  \cite{dreyfus1971steiner,feldmann2017complexity}.

Future research might be directed in trying to develop tools to deal with the large number of laminar families that need to be considered. One approach could be to use Column Generation to solve this problem. We can start with a small set of the laminar families, and then try to add new laminar families to the problem in a smart way. The main difficulty with this approach, is to develop an efficient pricing problem to determine which laminar families must be added to the problem.

Another line of research is to develop new approximation algorithms using our formulation. Since solving our formulation for a small set of laminar families is fast, then one may try to find laminar families which are good candidates to be optimal. For instance, one can look at the linear relaxation of the {\it Bidirected Cut} formulation, which is easy to solve, and then identify all the laminar families used by the fractional solution obtained. We can use those laminar families as candidates for an optimal solution. 

In addition, we can use our formulation to improve solutions delivered by any approximation algorithm. This can be done, because every solution delivered by an approximation algorithm is a Steiner tree. We can identify the $(r,l)$ structure of such a tree, and then find a minimum cost Steiner tree with $(r,l)$ structure by solving $LP(\mathcal{Z}_l)$, which will take polynomial time since we are only solving one subproblem.

Another research path is to develop an algorithm to solve each subproblem efficiently. The structure of the optimal solution for each problem is well characterized. For each subproblem, the problem reduces to finding the splitting nodes, and then taking the union of shortest paths. Furthermore, we can use this algorithm to construct a local search algorithm. Suppose we have a solution for a given laminar family $l\in\mathcal{L}_b$. There are laminar families in $\mathcal{L}_b$ that have a similar topology to that of $l$. Then, the idea is to develop an algorithm that searches for an improving laminar family in the neighborhood of $l$.

Finally, a different line of  research is to study our proposed approach in special graphs. There are important results in the literature for special graphs, in particular there are results that provide an upper bound on the integrality gap for well-known formulations. In these type of graphs, we may find a bound on how bad the optimal solution of the subproblem is, for the ``worst'' structure compared with the optimal solution for the entire problem. Moreover, there are results that show that for special cases of planar graphs, there is an algorithm to solve the problem in polynomial time \cite{bern1991polynomially}. These results, may imply that for those type of graphs, the number of tree structures to consider is limited.

\section*{Acknowledgement}
This research was partially supported by Office on Naval Research grants N00014-15-1-2078 and N00014-18-1-2075 to the Georgia Institute of Technology, and by the CONICYT (Chilean National Commission for Scientific and Technological Research) through the Doctoral Fellowship program ``Becas Chile'', Grant No. 72160393
\newpage
\bibliography{References}

\end{document}